\newcommand{\Z}{\mathbb{Z}}
\newcommand{\R}{\mathbb{R}}
\renewcommand{\H}{\mathbb{H}}
\newcommand{\F}{\mathcal{F}}
\newcommand{\scirc}{\raisebox{0.2ex}{$\scriptscriptstyle \circ$}}
\newcommand{\vf}[1]{\frac{\partial~}{\partial #1}}
\newcommand{\secf}[1]{\frac{\partial^2~}{\partial #1^2}}
\newcommand{\green}{\color{green}}
\newtheorem{theorem}{Theorem}
\newtheorem{proposition}{Proposition}
\theoremstyle{definition}
\newtheorem{remark}{Remark}
\numberwithin{equation}{section}
\begin{document}

\title[Free minimal actions which are not affable]%
      {Free minimal actions of solvable Lie groups which are not affable}


\author[F. Alcalde]{Fernando Alcalde Cuesta}
\address{Instituto de Matem\'aticas, Universidad de Santiago de Compostela, E-15782, Santiago de Compostela, Spain.
}
\email{fernando.alcalde@usc.es}
\thanks{This work was partially supported by Spanish MINECO Grant MTM2016-77642-C2-2-P), the European Regional Development Fund, and grant ANII-FCE-135352 from Uruguay. 
The research of the first author has been carried out despite the current administration of the University of Santiago de Compostela.
}

\author[\'A. Lozano]{\'Alvaro Lozano Rojo}
\address{Centro Universitario de la Defensa, Academia General Militar, 
         Ctra. Huesca s/n. E-50090 Zaragoza, Spain.}
\email{alozano@unizar.es}

\author[M. Mart\'{\i}nez]{Matilde Mart\'{\i}nez}
\address{Instituto de Matem\'atica y Estad\'{\i}stica Rafael Laguardia, 
         Facultad de Ingenier\'{\i}a, Universidad de la Rep\'ublica, 
         J.Herrera y Reissig 565, C.P.11300 Montevideo, Uruguay.}
\email{matildem@fing.edu.uy}

\subjclass[2010]{Primary 57R30; Secondary 37A20, 37B50.}

\date{}

\dedicatory{Dedicated to Jean Renault on his $70$th birthday}


\begin{abstract}
  We construct an uncountable family  of transversely Cantor laminations of 
  compact spaces defined by free minimal actions of solvable groups, which are 
  not affable and whose orbits are not quasi-isometric to Cayley graphs. 
\end{abstract}

\maketitle

\section{Introduction}

This paper is motivated by two different issues put forward by 
G.~Hector in~\cite{Hector}, and T.~Giordano, I.~Putnam, and C.~Skau in~\cite{GPS}, 
for which we give partial negative results.
\begin{list}{\labelitemi}{\leftmargin=18pt}

  \item[(1)] Hector's claim asserts that generic leaves of compact laminations 
    are quasi-isometric to Cayley graphs. From~\cite[Theorem~2]{Blanc2} 
    combined with~\cite[Theorem~C]{genericleaves} (see 
    also~\cite[Theorem~4]{Blanc2} for transversely Cantor laminations), we know 
    that this holds when the generic leaves have two ends. However, we will 
    prove that the claim is not true when the generic leaves have one end. 
    The question when they have a Cantor set of ends (which was 
    the case that really interested Hector)  remains open.
    \medskip

  \item[(2)] 
    Giordano, Putnam, and Skau conjectured that any minimal and free 
    continuous action of an amenable countable group on the Cantor set is orbit 
    equivalent to a Cantor minimal system (that is, a minimal $\Z$-action on 
    the Cantor set). 
    From~\cite[Theorems~4.8 and~4.16]{GPS}, this happens if and only if
    the orbital equivalence relation is \emph{affable} --namely, the union of 
    an increasing sequence of compact open equivalence subrelations, which 
    turns into an AF-equivalence relation endowed with the inductive limit 
    topology (see~\cite{GPS} and~\cite{Renault} for detailed definitions). We will 
    exhibit examples of amenable equivalence relations on the Cantor set which 
    are not affable, and therefore not orbit equivalent to a minimal 
    $\Z$-action. 
    This proves that there is no analogue to the famous Connes-Feldman-Weiss 
    theorem~\cite{CFW} in the topological setting. 

\end{list} 
\medskip 

To address these issues, we consider an uncountable family of 3-dimensional 
solvable Lie groups $Sol(a,b)$, with $a, b >0$,
which are not quasi-isometric to Cayley graphs unless $a=b$. These groups 
have been introduced and studied by A.~Eskin, D.~Fisher and K.~Whyte 
in~\cite{EF} and~\cite{EFW}. For those groups with $2^{b/a}$ being an integer, 
we construct a repetitive and aperiodic tiling $\hat{\mathcal{T}}$ inspired by 
the Penrose construction of an aperiodic tiling of the hyperbolic 
plane~\cite{Penrose}. As a byproduct, we obtain a repetitive and 
aperiodic tiling of the solvable group $Sol^3 = Sol(1,1)$. 
Theorem~\ref{thm:solenoid} states that the \emph{continuous hull} of $\hat{\mathcal{T}}$ --that is, the closure of the 
set of all its translates-- is a compact metric space $\mathcal{M}(a,b)$ 
endowed with a minimal free action of $Sol(a,b)$. This defines a transversely 
Cantor lamination $\F(a,b)$ on $\mathcal{M}(a,b)$, which satisfies:
\begin{list}{\labelitemi}{\leftmargin=18pt}
  
  \item[(i)] the leaves of $\F(a,b)$ and $\F(a',b')$ are quasi-isometric if and 
    only if $b/a = b'/a'$, 

  \item[(ii)] the leaves of $\F(a,b)$ are quasi-isometric to Cayley graphs if 
    and only if $a=b$, in which case $Sol(a,b)$ is isomorphic to the unimodular 
    solvable group $Sol^3$, 

  \item[(iii)] the lamination $\F(a,b)$ induces an equivalence relation 
    $\mathcal{R}(a,b)$ on a complete transversal homeomorphic to the Cantor set 
    which is not affable if $a \neq b$.

\end{list} 
In this foliated context, a transversely Cantor lamination is said to be 
\emph{affable} if the equivalence relation induced on some (every) complete 
transversal is affable. So property (iii) can be rephrased as the lamination 
$\F(a,b)$ is not affable  if $a\neq b$.
\medskip 

The problem of whether $\F(a,b)$ is affable involves studying it from 
an ergodic point of view, and associating to it two classes of measures which 
are well known in foliation dynamics: \emph{transverse 
invariant measures} and \emph{harmonic measures}. 
Theorem~\ref {thm:harmonic} asserts that harmonic measures for $\F(a,b)$ (having 
harmonic densities when they are locally desintegrated on flow boxes 
according to \cite{Garnett}) coincide with measures 
$Sol(a,b)$-invariant (which remain invariant when they are translated by any element of $Sol(a,b)$). 
Both kinds of measures provide 
\emph{quasi-invariant measures} on transversals, only defined up to equivalence, 
with respect to which the equivalence relation $\mathcal{R}(a,b)$ is amenable. A harmonic measure for $\F(a,b)$ is said to be \emph{completely invariant} when the transverse measure is $\mathcal{R}(a,b)$-invariant, that is, preserved by partial transformations whose graphs are contained in $\mathcal{R}(a,b)$.
Unless $a=b$, the lamination $\F(a,b)$ does not admit transverse 
invariant measures, and this is the obstruction we use to see that $\mathcal{R}(a,b)$ is 
not affable. 
\medskip 

To clarify this point, recall that Penrose's tiling has been used in \cite{Petite} to construct free minimal actions of the affine group on the Cantor set. All these actions also give a negative answer to question (2) as they do not admit transverse 
invariant measures according to \cite[Proposition 3.1]{Petite}. However, their orbits are always quasi-isometric to Cayley graphs. We extend here Petite's remark by proving in Proposition~\ref{prop:nonunimodular} that $\F(a,b)$ admits a transverse 
 invariant measure if and only if $Sol(a,b)$ is unimodular, that is, $a=b$. In fact, this result applies to (locally) free actions of Lie groups and  transformational groupoids as detailed in Appendix~A. Thus, any (locally) free minimal action of an amenable Lie group $G$ is not affable if $G$ is non unimodular, in contrast with the abelian case solved in \cite{GMPS}. However, it is still an open question if $Sol(1,1)$-solenoids are affable or not.

\section{A family of nonunimodular solvable Lie groups of dimension 3} \label{SSol(a,b)}

As established  in~\cite[Theorem 1.6]{EFW} (see also~\cite[Theorem 5.9]{EF}) 
and proved in~\cite{EFW2} and~\cite{EFW3}, there are Lie groups which are not 
quasi-isometric to any finitely generated group. Indeed, if $a,b>0$ with $a\neq b$, 
the semi-direct product $Sol(a,b) = \R^2\rtimes\R$ defined by the 
$\R$-action 
\[
  z\in\R\mapsto 
    \begin{pmatrix}
      e^{az} & 0 \\
      0 & e^{-bz} 
    \end{pmatrix}
  \in GL(2,\R)
\]
is a nonunimodular solvable group that does not admit any quasi-isometric 
finitely generated group. If $a=b$, then $Sol(a,b)$ is isomorphic to the 
unimodular Lie group $Sol^3$, which defines one of the eight Thurston 
geometries of closed 3-manifolds. Moreover, two Lie groups $Sol(a,b)$ and 
$Sol(a',b')$ are quasi-isometric if and only if $b/a=b'/a'$. 

Each solvable group $Sol(a,b)$ contains two transverse fields of hyperbolic 
planes $\H$, obtained as orbits of two transverse affine actions. In 
the next section, we shall use this idea to construct an aperiodic tiling of 
$Sol(a,b)$ in a similar way as R.~Penrose constructed an aperiodic tiling of 
$\H$~\cite{Penrose}. Identifying the hyperbolic plane
$\H=\{\, \alpha+\beta i\mid\beta>0 \,\}$ with the semi-direct product 
$\R\rtimes\R^*_+$, we can consider the natural inclusion into $\R^2\rtimes\R$ 
which sends $(\alpha,\beta)$ in $(\alpha,0,\frac{\log\beta}{a})$. This is a 
integral surface of the foliation defined by the invariant vector fields 
\[
  X = e^{az} \vf{x} \quad \text{and} \quad Z = - \vf{z},
\]
with flows 
\[
  h^+_s(x,y,z) = (x,y,z)\cdot(s,0,0) = (x+e^{az}s,y,z)
\]
and
\[
  g_t(x,y,z) = (x,y,z)\cdot(0,0,t) = (x,y,z+t)
\]
respectively. Since the Lie bracket $[X,Z] =aX$, the foliation is actually 
given by a locally free affine action. Equivalently both flows are related by
\begin{equation}
  \label{eq:B+}
  h^+_s \scirc g_t = g_t \scirc h^+_{e^{at}s}.
\end{equation}

The flow $h^+_s$ restricts to the horocycle flow of $\H$, but the geodesic 
flow is not obtained from the flow $g_t$ but from the reparametrized flow 
$g_{t/a}$. Indeed, the flow generated by $X$ and $Z$ in restriction to $\H$ are 
given by 
\[
  h^+_s(\alpha+\beta i) = \alpha+\beta s + \beta i
  \quad\text{and}\quad
  g_t(\alpha+\beta i) = \alpha + e^{at}\beta i.
\]
The left invariant Riemannian metric on $Sol(a,b)$ is  
\[
  e^{-2az}dx^2 + e^{2bz}dy^2 + dz^2,
\]
and then its restriction to $\H$ 
\[
  \frac{d\alpha^2 + (d\beta / a)^2}{\beta^2} 
    = \frac1{a^2} \frac{d\alpha^2 + d\gamma^2}{\gamma^2} 
\]
is conformally equivalent (by a homothety) to the Poincar\'e metric (up 
to the coordinate change $\gamma=\beta/a$). Thus, the inclusion of $\H$ into 
$Sol(a,b)$ sends geodesics and horocycles into orbits of $g_t$ and $h^+_s$ 
respectively.

The flow of the third left invariant vector field 
\[
  Y = e^{-bz}\vf{y}
\]
 is given by
\[
  h^-_s(x,y,z) = (x,y,z)\cdot(0,s,0) = (x,y+e^{-bz}s,z)\green, 
\]
satisfying
\begin{equation}
  \label{eq:B-}
  h^-_s \scirc g_t = g_t \scirc h^-_{e^{-bt}s}.
\end{equation}
This equality can be also deduced from the Lie bracket equality $[Y,Z] = -bY$.  
Finally, since $[X,Y]=0$, the flows $h^+_s$ and $h^-_s$ commute. 

\begin{figure}
  \begin{tikzpicture}[scale=1]
    \node at (0,0) {\includegraphics[width=\textwidth]{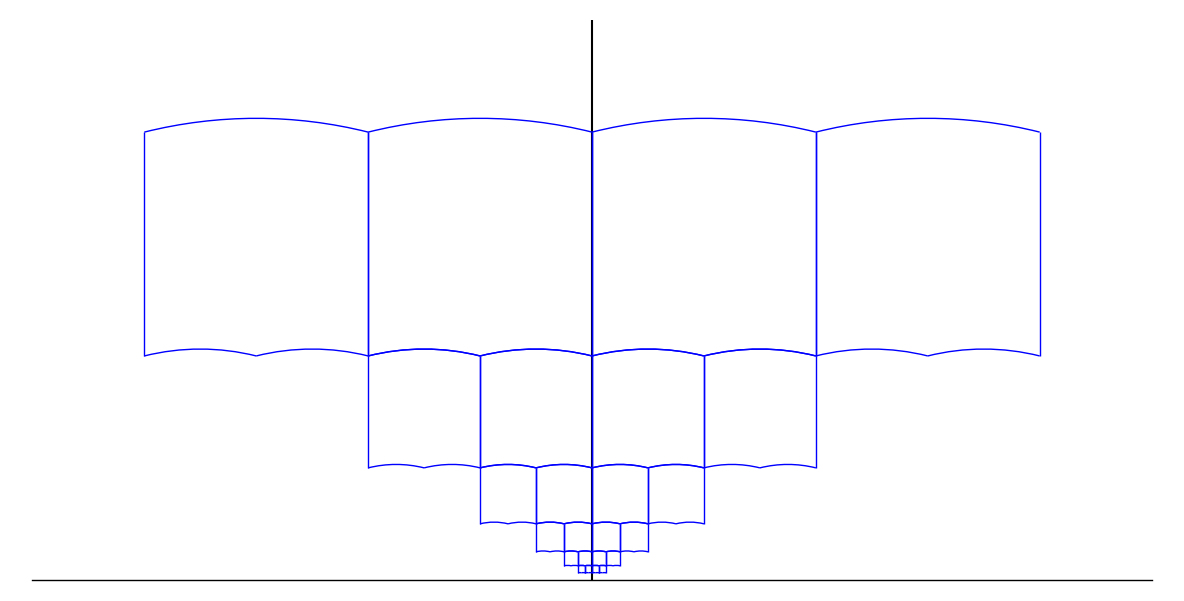}};
    \draw [->] (0.7,-0.8) -- (1.2,0.7);
    \draw[->] (0.53,-1.4) -- (0.28,-2);
    \draw[->] (0.8,-1.1) -- (1.7,-1.1);
    \node at (0.6,-1.1) {$P$}; 
    \node at (1.2,-0.9) {\scriptsize $S$}; 
    \node at (0.78,0.1) {\scriptsize $R$}; 
    \node at (0.75,-1.65) {\scriptsize $R^{-1}$}; 
  \end{tikzpicture}
  \caption{Penrose's tiling of $\H$.}
  \label{fig:Penrose}
\end{figure}

\section{The hyperbolic Penrose tiling} \label{SPenrose}
According to~\cite{Penrose}, the Poincar\'e half-plane $\H$ admits a tiling 
$\mathcal{T}$ constructed from a single tile $P$ (see 
Figure~\ref{fig:Penrose}), which is neither \emph{periodic} nor 
\emph{aperiodic}.  Let us explain the meaning of both notions (see 
also~\cite{KP}). If we consider the isometries $R$ and $S$ given by
\[
  R(Z)=2Z \quad\text{and}\quad S(Z) = Z+1
\]
for every $Z = \alpha + i\beta \in \H$, then 
\[
  \mathcal{T} =  \{\, R^i\scirc S^j(P) \mid i,j\in\Z \,\}.
\]
The tiling $\mathcal{T}$ is not \emph{periodic} since there is no cocompact 
Fuchsian group preserving $\mathcal{T}$ (and therefore having a fundamental 
domain made up of a finite number of tiles). This follows from a homological 
argument by Penrose \cite{Penrose}, see also \cite{MargulisMozes}. Nor is 
$\mathcal{T}$ \emph{aperiodic} since $\mathcal{T}$  is preserved by the 
isometry $R$ and then the group of hyperbolic isometries preserving 
$\mathcal{T}$ is not trivial. In fact, both notions can be formulated in terms 
of affine transformations instead of isometries, that is, we can replace the 
group of orientation-preserving isometries $PSL(2,\R)$ by the affine subgroup 
\[
  B^+ = \left\{\,
    \begin{pmatrix}
      \sqrt{\beta} & \alpha/\sqrt{\beta} \\
      0            & 1/\sqrt{\beta} 
    \end{pmatrix}
    \biggm|
    \alpha,\beta \in\R, \ \beta > 0
  \,\right\}
\]
acting freely and transitively on $\H$.
\medskip 

But as explained in \cite{Petite} (and detailed below), we can decorate the 
tiles $R^i(P)$ to break down this symmetry by using a repetitive sequence $\{\omega_i\}_{i\in\Z}$ of $0$'s and $1$'s (as shown in Figure~\ref{fig:Oxtoby}).
Then $\mathcal{T}$ becomes 
\emph{aperiodic}, that is, $\mathcal{T}$  is not preserved by any non-trivial 
element of $B^+$. 
We actually obtain a set of decorated prototiles, which is not longer a 
singleton but finite, allowing to construct both aperiodic and non aperiodic 
tilings. 
\medskip 

The compact metric space $\mathcal{M}(\H)$ made up of these hyperbolic tilings 
(marked with a fixed base point) is equipped with a natural right $B^+$-action 
where each tiling is translated by the inverse of each isometry in $B^+$, that is, 
\[
  \mathcal{T}\cdot g = g^{-1}(\mathcal{T})
\]
for each $g\in B^+$.  The orbital equivalence relation $\mathcal{R}$ coincides with the natural equivalence relation that consists of moving the base point of each tiling. Recall also that two tilings in $\mathcal{M}(\H)$ are \emph{close} if they agree on a large ball in $\H$ centered at the base point, up to an affine transformation close to the identity (see~\cite{Ghys} and~\cite{Petite} for details). 
\medskip 

The closure of the orbit $\mathcal{R}[\mathcal{T}]$ is a nonempty closed invariant subset of $\mathcal{M}(\H)$, called the \emph{continuous hull of $\mathcal{T}$}, which contains a nonempty minimal subset $\mathcal{M}_0$.  In fact, the tiling $\mathcal{T}$ is \emph{repetitive}. This means that for each patch $\mathcal{P}$, there is a positive number $R > 0$ such that every ball in $\H$ of radius $R$ contains a translation copy of $\mathcal{P}$. It is a general fact (see for example~\cite{KP}) for any repetitive and aperiodic tiling that the minimal set $\mathcal{M}_0$ coincides with continuous hull of $\mathcal{T}$, and that any tiling in $\mathcal{M}_0$ is also aperiodic. Then $\mathcal{M}_0$ is equipped with a minimal free affine action.

\begin{figure}
  \begin{tikzpicture}[scale=1]
    \node at (0,0) {\includegraphics[width=\textwidth]{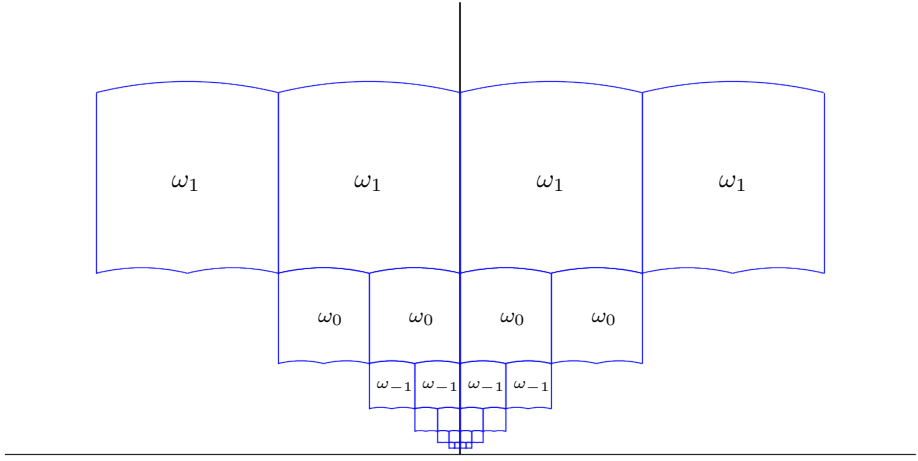}};
    \node at (0.7,-1.2)  {\footnotesize$\omega_0$}; 
    \node at (1.9,-1.2)  {\footnotesize$\omega_0$}; 
    \node at (-0.5,-1.2) {\footnotesize$\omega_0$}; 
    \node at (-1.7,-1.2) {\footnotesize$\omega_0$}; 
    \node at (1.2,0.6)  {$\omega_1$}; 
    \node at (3.6,0.6)  {$\omega_1$}; 
    \node at (-1.2,0.6) {$\omega_1$}; 
    \node at (-3.6,0.6) {$\omega_1$}; 
    \node at (0.35,-2.1)  {\tiny$\omega_{-1}$}; 
    \node at (0.95,-2.1)  {\tiny$\omega_{-1}$}; 
    \node at (-0.25,-2.1) {\tiny$\omega_{-1}$}; 
    \node at (-0.85,-2.1) {\tiny$\omega_{-1}$}; 
  \end{tikzpicture}
  \caption{Penrose's tiling decorated with a repetitive sequence.}
  \label{fig:Oxtoby}
\end{figure}

\section{Constructing an aperiodic tiling of $Sol(a,b)$}
\label{Sconst}

To obtain a similar tiling of $Sol(a,b)$, we will use the isometries
\[
  \hat{T}_s(x,y,z) = (0,s,0)\cdot(x,y,z) = (x,y+s,z)
\]
to thicken the tile $P$ into a tile 
\[
  \hat{P} = \bigcup_{s\in[0,1]} \hat T_s(P).
\]
First, we extend the isometries $R$ and $S$ into isometries of $Sol(a,b)$ 
and we see their effect on $\hat{P}$. Thus, we consider the left translations
\[
  \hat{R}(x,y,z) = (0,0,\tfrac{\log2}{a})\cdot(x,y,z) 
                 = (2x,\Delta y,z+\tfrac{\log2}{a})
\]
and 
\[
  \hat{S}(x,y,z) = (1,0,0)\cdot(x,y,z) = (x+1,y,z)
\]
where 
\begin{equation}
  \label{eq:Delta}
  \Delta = e^{-b\frac{\log2}{a}} =  2^{-b/a}.
\end{equation}
They extend $R$ and $S$ since 
\[
  \hat{R}(\alpha,0,\tfrac{\log\beta}{a}) = (2\alpha,0,\tfrac{\log2\beta}{a})
    \quad \text{and} \quad 
  \hat{S}(\alpha,0,\tfrac{\log\beta}{a}) = (\alpha+1,0,\tfrac{\log\beta}{a})
\]
for each $\alpha+i\beta \in \H$. 
Now, we have
\begin{align*}
  \hat{T}_s\scirc\hat{R}(x,y,z) &= \hat{T}_s(2x,\Delta y,z+\tfrac{\log2}{a}) \\
    & = (2x, \Delta y + s, z+\tfrac{\log2}{a}) \\
    & = \hat{R}(x, y + \Delta^{-1}s, z) \\ 
    & = \hat{R} \scirc \hat{T}_{s\Delta^{-1}}(x,y,z)
\end{align*}
for any $(x,y,z) \in Sol(a,b)$. Therefore, 
we obtain:
\begin{equation}
  \label{eq:R}
  \hat{T}_s\scirc\hat{R}^i = \hat{R}^i\scirc\hat{T}_{s\Delta^{-i}}
\end{equation}
for any $i\in\Z$. Similarly, we have
\begin{align*}
  \hat{T}_s\scirc\hat{S}(x,y,z) &= \hat{T}_s (x+1,y,z) = (x+1, y+s, z) \\
                               &= \hat{S}(x, y+s, z)
                                = \hat{S}\scirc\hat{T}_s(x,y,z)
\end{align*}
for any $(x,y,z)\in Sol(a,b)$, and therefore
\begin{equation}
  \label{eq:S}
  \hat{T}_s\scirc\hat{S}^j = \hat{S}^j\scirc\hat{T}_s
\end{equation}
for any $m\in\Z$. It follows that:


\begin{proposition} \label{thm:tiling}
  The family
  \[
    \hat{\mathcal{T}} = \{\,
      \hat{R}^i\scirc\hat{S}^j\scirc\hat{T}_k(\hat{P})
      \mid i,j,k\in\Z
    \,\}
  \]
  is a tiling of $Sol(a,b)$, by which we mean that
 $Sol(a,b)$ is the union of the tiles 
$ \hat{R}^i\scirc\hat{S}^j\scirc\hat{T}_k(\hat{P})$
and that the  intersection of any two tiles has empty interior. 
\end{proposition}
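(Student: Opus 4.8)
The plan is to exploit the fact that the $\hat T$-flow splits $Sol(a,b)$ as a product, thereby reducing the claim to the \emph{known} Penrose tiling $\mathcal T = \{\,R^i\scirc S^j(P)\mid i,j\in\Z\,\}$ of $\H$ together with the trivial tiling of $\R$ by intervals. Concretely, since $\hat T_y(x,0,z)=(x,y,z)$, the map $\Phi\colon\H\times\R\to Sol(a,b)$, $\Phi(p,y)=\hat T_y(p)$ (where $p\in\H$ is embedded at $y=0$), is a homeomorphism, and under it the thickened tile is exactly the product $\hat P=\Phi(P\times[0,1])$.

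First I would record how the three families of isometries act in these coordinates. Because $\hat R$ and $\hat S$ preserve the slice $\{y=0\}\cong\H$ and restrict there to $R$ and $S$, and because of the commutation relations \eqref{eq:R} and \eqref{eq:S}, a direct computation gives
\begin{equation*}
  \hat R^i\scirc\hat S^j\scirc\hat T_k\scirc\Phi(p,y)
    = \Phi\bigl(R^i\scirc S^j(p),\,\Delta^i(y+k)\bigr).
\end{equation*}
Consequently each tile decomposes as a product,
\begin{equation*}
  \hat R^i\scirc\hat S^j\scirc\hat T_k(\hat P)
    = \Phi\bigl(R^i\scirc S^j(P)\times[\Delta^i k,\Delta^i(k+1)]\bigr),
\end{equation*}
namely the cylinder over the Penrose tile $R^i\scirc S^j(P)$ whose $y$-coordinate ranges over an interval of length $\Delta^i$.

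With this product structure the two defining properties follow. For the covering: given $(p,y)$, choose $i,j$ with $p\in R^i\scirc S^j(P)$, which is possible since $\mathcal T$ tiles $\H$; for that $i$ the intervals $\{[\Delta^i k,\Delta^i(k+1)]\}_{k\in\Z}$ cover $\R$, so some $k$ captures $y$. For disjointness of interiors: if two tiles overlap, then so must their $\H$-factors $R^i\scirc S^j(P)$ and $R^{i'}\scirc S^{j'}(P)$, forcing them to be the same tile of $\mathcal T$; since the level $i$ is recoverable from the tile (its points have $z\in[i\tfrac{\log2}{a},(i+1)\tfrac{\log2}{a}]$, equivalently $\beta\in[2^i,2^{i+1}]$), this yields $i=i'$ and $j=j'$, and then the two $y$-intervals, now of equal length $\Delta^i$, can overlap only when $k=k'$.

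The one point requiring care is the interplay between the level $i$ and the $y$-scale $\Delta^i$: applying $\hat R$ rescales the $y$-direction by $\Delta$ at the same time as it acts by $R$ on $\H$. The argument works precisely because this rescaling is absorbed by the index $k$, so that within each level the $y$-intervals still tile $\R$, while across different levels disjointness is already furnished by the $\H$-factor. I expect the main (and only minor) obstacle to be justifying that the indexing $(i,j)\mapsto R^i\scirc S^j(P)$ is faithful and that $i$ is determined by the tile, which is exactly what prevents $y$-intervals belonging to different levels from spuriously overlapping.
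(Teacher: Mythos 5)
Your proposal is correct and takes essentially the same route as the paper: both use the commutation relations \eqref{eq:R} and \eqref{eq:S} to exhibit each tile $\hat R^i\scirc\hat S^j\scirc\hat T_k(\hat P)$ as the product of the Penrose tile $R^i\scirc S^j(P)$ with a $y$-interval of length $\Delta^i$, and then reduce the claim to the tiling property of $\mathcal{T}$ on $\H$ together with the tiling of $\R$ by the rescaled intervals $[k\Delta^i,(k+1)\Delta^i]$. Your handling of disjointness (faithful indexing of $(i,j)$ and recovery of the level $i$ from the $z$-range) only makes explicit what the paper leaves implicit in asserting ``without gaps or overlaps.''
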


\begin{proof}
  From~\eqref{eq:R} and~\eqref{eq:S}, we deduce that
  \begin{align*}
    \hat{R}^i(\hat{T}_k(\hat{P})) &= \bigcup_{s\in[k,k+1]} 
      \hat{R}^i\scirc \hat{T}_s(P) \\
    & = \bigcup_{s\in[k,k+1]} \hat{T}_{s\Delta^i} \scirc \hat{R}^i(P) 
      = \bigcup_{s\in[k\Delta^{i}, (k+1)\Delta^{i}]}\hat{T}_s(\hat{R}^i(P))
  \end{align*}
  and 
  \begin{align*}
    \hat{S}^j(\hat{T}_k(\hat{P})) 
      &= \bigcup_{s\in[k,k+1]}  \hat{S}^j\scirc\hat{T}_s(P) \\
      &= \bigcup_{s\in[k,k+1]}  \hat{T}_s\scirc\hat{S}^j(P)
       = \bigcup_{s\in[k,k+1]} \hat{T}_s(\hat{S}^j(P))
  \end{align*}
  for $i,j\in\Z$. The real line $\R$ is covered by the intervals $[k,k+1]$ 
  without overlaps, and similarly by the intervals 
  $[k\Delta^i, (k+1)\Delta^i]$ if we apply a homothety of (fixed) ratio 
  $\Delta^i$.  Therefore, since  $\mathcal{T} = \{\, R^i\scirc S^j(P) \mid i,j\in\Z \,\}$ is 
  a tiling of $\H$ without gaps or overlaps, the family 
  $\hat{\mathcal{T}} = \{\, \hat{R}^i\scirc\hat{S}^j\scirc\hat{T}_k(\hat{P}) \mid i,j,k\in\Z \,\}$ also covers $Sol(a,b)$ without gaps or overlaps.
 \end{proof}

  Similarly to $\mathcal{T}$, the tiling $\hat{\mathcal{T}}$ is neither 
  periodic, nor aperiodic. It cannot be periodic because in that case 
  $\mathcal{T}$ would be also periodic. In fact, if $a\neq b$, this property 
  can be deduced from the non-unimodularity of $Sol(a,b)$. On the other hand, 
  $\hat{\mathcal{T}}$ remains invariant by $\hat{R}$. In fact, accordingly 
  to~\eqref{eq:R}, the group of isometries preserving $\hat{\mathcal{T}}$ is 
  reduced to the subgroup of $Sol(a,b)$ generated by $\hat{R}$.
  \medskip 
  
 Moreover, in the previous construction, the tiles of $\hat{\mathcal{T}}$ do not meet face-to-face in the $y$ direction. We will impose an additional condition, the condition that
 $$
\Delta^{-1} \in \mathbb{Z^+}.$$
This is a condition on $b/a$, allowing us to choose countably many values of  it.
  
  \begin{proposition} \label{thm:facetoface}
 If 
 $b/a = log \, n / log \, 2$ for some integer $n \geq 2$, then the tiling $\hat{\mathcal{T}}$ is face-to-face and repetitive.  Moreover, 
the prototile $\hat{P}$ admits a finite number of decorations such that $\hat{\mathcal{T}}$ also becomes aperiodic.
\end{proposition}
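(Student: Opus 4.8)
The plan is to establish the three assertions---face-to-face, repetitive, and the existence of finitely many aperiodic decorations---in turn, in each case reducing to the corresponding feature of the planar Penrose tiling $\mathcal{T}$ together with the single arithmetic input that $n:=\Delta^{-1}=2^{b/a}$ is a positive integer.

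\emph{Face-to-face.} Since the base tiles $\hat R^i\scirc\hat S^j(P)$ lie in $\{y=0\}$ and are thickened uniformly by the commuting flow $\hat T_s$, the faces of $\hat{\mathcal{T}}$ split into those inherited from $\mathcal{T}$ (in the $xz$-directions) and new faces transverse to the $y$-axis; as noted in the text, only the latter can fail to match. Reading off the proof of Proposition~\ref{thm:tiling}, the tile $\hat R^i\scirc\hat S^j\scirc\hat T_k(\hat P)$ occupies the $y$-interval $[k\Delta^i,(k+1)\Delta^i]$, so at level $i$ the $y$-faces sit on the grid $\Delta^i\Z$. Tiles of different $y$-thickness meet only along a horizontal face between consecutive levels $i$ and $i+1$, where one must compare $\Delta^i\Z$ with $\Delta^{i+1}\Z=\Delta^{i}n^{-1}\Z$. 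The intersection of a level-$i$ tile with a level-$(i+1)$ tile is either empty or a full $y$-face of the thinner tile precisely when $\Delta^i\Z\subseteq\Delta^{i+1}\Z$, i.e. exactly when $n\in\Z$; for $n\notin\Z$ the two grids are incommensurable and the $y$-faces fail to match. By \eqref{eq:R} the translation $\hat R$ carries the picture at level $i$ to that at level $i+1$, so checking one pair of consecutive levels suffices.

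\emph{Repetitive.} Here I would transport repetitivity from $\mathcal{T}$. The key remark is that in the left-invariant metric $e^{-2az}dx^2+e^{2bz}dy^2+dz^2$ all tiles have uniformly bounded size: a level-$i$ tile sits near $z=i\tfrac{\log2}{a}$, where $e^{bz}\approx n^{i}$, so its metric $y$-width $e^{bz}\Delta^i\approx n^{i}n^{-i}=1$ is independent of $i$, and likewise in $x$. An occurrence of a patch $\mathcal{P}$ spanning levels $[i_0,i_1]$ is a translate $\hat R^m\scirc\hat S^l\scirc\hat T_c$ whose projection to $\H$ (forgetting $y$) is an occurrence of $\pi(\mathcal P)$ in $\mathcal{T}$ and whose $y$-shift $c$ lies on the coarsest grid $\Delta^{i_0}\Z$ of the patch; this last condition is solvable exactly because $n\in\Z$ makes the grids of all levels of $\mathcal{P}$ simultaneously compatible. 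Repetitivity of $\mathcal{T}$ makes the $\H$-occurrences $R_0$-dense, while the uniform tile size turns the admissible $y$-shifts into an $O(1)$-dense set of placements in the metric; combining the two gives a uniform recurrence radius. The main obstacle is precisely this last step: the three coordinate directions scale differently under $\hat R$, and one has to pass to the left-invariant metric to see that occurrences are uniformly (not merely coordinate-wise) dense.

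\emph{Aperiodic decorations.} Finally, as in the planar case and in \cite{Petite}, I would fix a \emph{repetitive and non-periodic} sequence $\{\omega_i\}_{i\in\Z}\in\{0,1\}^{\Z}$ (for instance a Sturmian sequence) and decorate every tile of level $i$ by the label $\omega_i$. Because all tiles of a given level are $Sol(a,b)$-congruent and consecutive levels are interchanged by $\hat R$, this yields only the two decorated prototiles $(\hat P,0)$ and $(\hat P,1)$, so the decoration is finite. Any symmetry of the decorated tiling is in particular a symmetry of $\hat{\mathcal{T}}$, hence lies in $\langle\hat R\rangle$ by the remark following Proposition~\ref{thm:tiling}; and $\hat R^{p}$ preserves the labels if and only if $\omega_{i+p}=\omega_i$ for all $i$, that is, iff $p$ is a period of $\{\omega_i\}$. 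Non-periodicity forces $p=0$, so the symmetry group is trivial and $\hat{\mathcal{T}}$ becomes aperiodic. Repetitivity survives the decoration because the admissible level-shifts $m$ are confined to an interval determined by the $z$-coordinate of the target ball, and repetitivity of $\{\omega_i\}$ produces a label-matching shift inside any such interval of bounded length; this compatibility between the geometric recurrence and the symbolic recurrence of $\{\omega_i\}$ is the one extra point to verify beyond the undecorated case.
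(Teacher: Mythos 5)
Your proof is correct and follows essentially the same route as the paper: the integrality of $n=\Delta^{-1}$ is exactly what makes the $y$-grids $\Delta^i\Z\subseteq\Delta^{i+1}\Z$ of consecutive levels compatible (the paper phrases this as subdividing the face $\hat\tau$ into $n$ equal faces), and decorating the level-$i$ tiles with the terms of a repetitive non-periodic binary sequence kills the residual symmetry group $\langle\hat R\rangle$ while preserving repetitivity. The only inessential differences are your choice of a Sturmian sequence where the paper takes the Morse or an Oxtoby sequence (a choice that matters later for the ergodic properties of the hull, but not for this proposition), and that your sketches of repetitivity and of the label-matching aperiodicity argument supply details the paper's proof leaves implicit.
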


 \begin{proof} 
 Denote by $\tau$ the top curve of the prototile $P$ and set $\hat\tau = \bigcup_{s\in[0,1]}  \hat{T}_s(\tau)$. If 
  $\Delta^{-1}$ is a positive integer $n \geq 2$, we can divide the face $\hat\tau$ into $n$ 
  equal faces  
  $$ \bigcup_{s\in[\frac{l-1}{n}, \frac l n]} \hat{T}_s(\tau)$$ 
  for 
  $l \in\{1,2,\ldots,n\}$ as depicted in Figure~\ref{fig:that}.  In this way, the tiles of $\hat{\mathcal{T}}$ 
  meet face to face.

  Moreover, if we denote by $\omega =  \{\omega_i\}_{i\in\Z}$ the bilateral Morse 
  sequence~\cite{Auslander} or an Oxtoby sequence~\cite{Oxtoby}, we can 
  use the terms $\omega_i$ to decorate the tiles of $\hat{\mathcal{T}}$ as we decorated 
  $\mathcal{T}$ in Figure~\ref{fig:Oxtoby}. In both cases, two colors are enough to break 
  down the initial symmetry of $\mathcal{T}$ and $\hat{\mathcal{T}}$, although the continuous 
  hulls $\mathcal{M}(a,b,\omega)$ will have different ergodic properties depending 
  on the sequence used (see~\cite{Petite} for details in the case of 
  $\mathcal{T}$). Finally, since both 
 $\omega$
and $\hat{\mathcal{T}}$ are repetitive, so the decorated tiling $\hat{\mathcal{T}}(\omega)$ is.
\end{proof}

  \begin{figure}
    \centering
    \begin{tikzpicture}
      \node {\includegraphics[width=4.5cm]{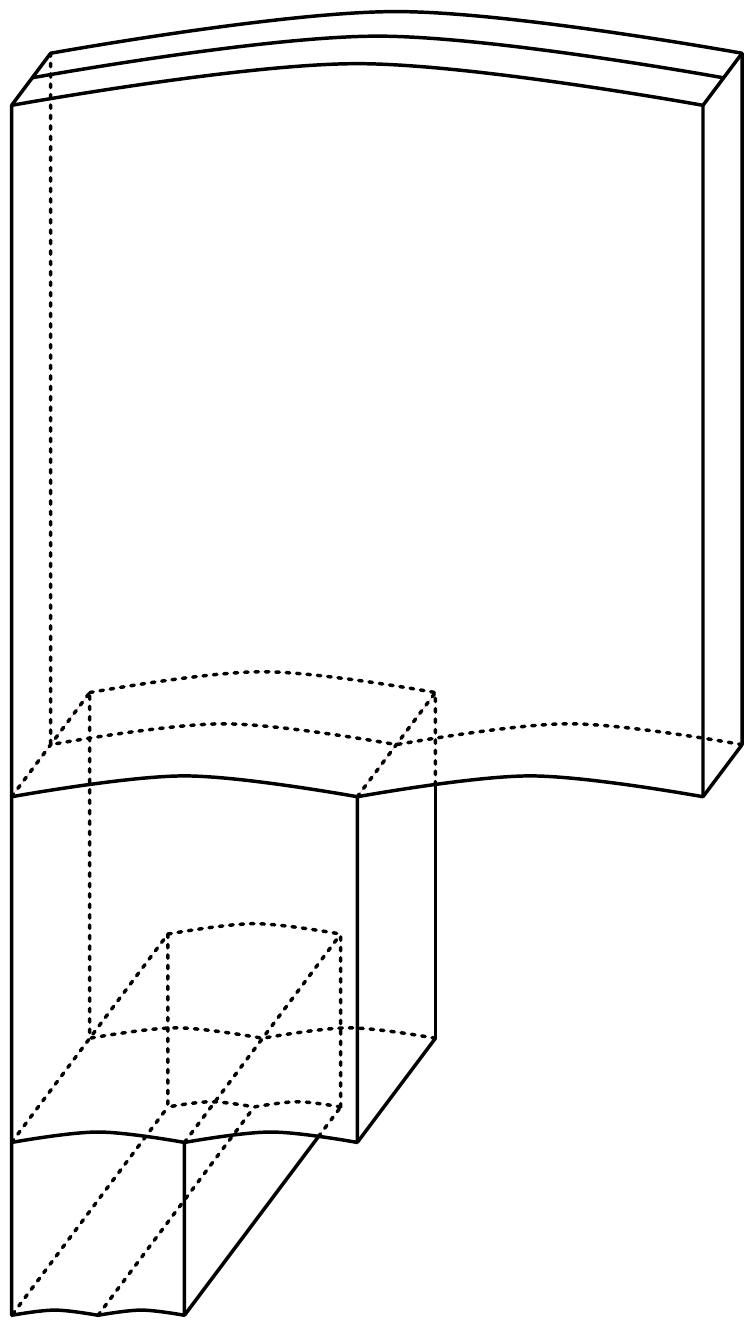}};
  \node at (-1.1,-1.25) {$\hat{P}$}; 
    \end{tikzpicture}
    \caption{The tiling $\hat{\mathcal{T}}$ when $\Delta = \frac12$.}
    \label{fig:that}
  \end{figure}
  
\begin{remark} As explained before, the groups $Sol(a,b)$ considered by Eskin, Fisher and White verify $a,b >0$.
If $a > 0$ and $b < 0$, the groups $Sol(a,b)$ are examples of Heintze groups having negative curvature. In this case, the fact that $Sol(a,b)$ is not quasi-isometric to a Cayley graph was previously proved by B. Kleiner as pointed in \cite{EF} (see also \cite{EFW} and \cite{EFW2}). Now, Proposition~\ref{thm:facetoface} remains valid if 
$\Delta \in \mathbb{Z^+}$,
or equivalenty if 
$$
-b/a = log \, n / log \, 2
$$ 
for some integer $n \geq 2$. 
\end{remark}
  

\section{A $Sol(a,b)$-solenoid} 

Consider the space of tilings of $Sol(a,b)$ constructed from the decorated 
prototiles constructed in Proposition~\ref{thm:facetoface}.
It admits an action by left translations of $Sol(a,b)$, 
which is of course an action by isometries for any left-invariant metric. Two 
tilings of $Sol(a,b)$ remain close if they agree on a large ball in $Sol(a,b)$ 
centered at the identity element, up to an isometry close to the identity map. 
The orbital equivalence relation still coincides with the natural equivalence 
relation that consists of translating the base point of each tiling. 
Restricted to the continuous hull $\mathcal{M}(a,b,\omega)$ of the decorated tiling 
 $\hat{\mathcal{T}}(\omega)$, the action of $Sol(a,b)$ is free and minimal. 
Finally, as  $\hat{\mathcal{T}}(\omega)$ satisfies the finiteness condition of~\cite[Theorem~2.2]{KP}, 
 $\mathcal{M}(a,b,\omega)$ is compact. Therefore, we get the result announced in the introduction:

\begin{theorem} \label{thm:solenoid}
  If 
$\pm b/a = log \, n / log \, 2$ for some integer $n\geq2$, the convex hull 
$\mathcal{M}(a,b,\omega)$ of  $\hat{\mathcal{T}}(\omega)$ is a nonempty compact metric space 
  that has a free minimal action of $Sol(a,b)$. With the foliation given by the 
  orbits, it has the structure of a transversely Cantor lamination.
\end{theorem}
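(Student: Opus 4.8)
The plan is to assemble Theorem~\ref{thm:solenoid} from the structural facts already established for the decorated tiling $\hat{\mathcal{T}}(\omega)$, treating the continuous hull as a standard tiling space and verifying the four assertions (nonempty, compact, free minimal $Sol(a,b)$-action, transversely Cantor lamination) in turn. First I would set up the continuous hull $\mathcal{M}(a,b,\omega)$ precisely as the closure, in the local metric topology described in the previous section, of the set of left translates of $\hat{\mathcal{T}}(\omega)$, where two tilings are declared close when they agree on a large metric ball around the identity up to a small isometry. Nonemptiness is immediate since $\hat{\mathcal{T}}(\omega)$ itself lies in the hull. The $Sol(a,b)$-action by left translations is continuous for this topology and preserves the hull by construction, so the first structural point is essentially definitional.

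For compactness, I would invoke \cite[Theorem~2.2]{KP} directly: the excerpt already asserts that $\hat{\mathcal{T}}(\omega)$ satisfies the relevant finiteness condition (finitely many decorated prototiles by Proposition~\ref{thm:facetoface}, together with the face-to-face property ensuring finite local complexity, i.e.\ finitely many patch-types up to translation in each bounded radius). This finite local complexity is exactly what makes the hull sequentially compact via a diagonal/Arzel\`a--Ascoli argument on larger and larger balls, so I would cite the \cite{KP} result rather than reprove it. Minimality and freeness I would deduce from the general principle, recalled in Section~\ref{SPenrose}, that for a \emph{repetitive} and \emph{aperiodic} tiling the minimal set coincides with the whole continuous hull and every tiling in it is aperiodic. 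Proposition~\ref{thm:facetoface} supplies precisely repetitivity and aperiodicity of $\hat{\mathcal{T}}(\omega)$, so minimality follows from repetitivity (every patch recurs with bounded gaps, hence every orbit is dense), and freeness follows from aperiodicity: if some $g\in Sol(a,b)$ fixed a tiling $\mathcal{T}'$ in the hull, then $\mathcal{T}'$ would be preserved by a nontrivial isometry, contradicting that every tiling in a minimal aperiodic hull is itself aperiodic.

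The transversely Cantor lamination structure is the step requiring the most care. The orbits of the $Sol(a,b)$-action foliate $\mathcal{M}(a,b,\omega)$ into $3$-dimensional leaves; to see this is a lamination with Cantor transversals I would produce flow-box charts of the form $U\cong D^3\times C$, where $D^3$ is a ball in $Sol(a,b)$ realized by local translation of a fixed tiling and $C$ is the set of tilings in the hull agreeing with a reference one on a neighborhood of the identity. The key point is that, because of finite local complexity, this transversal $C$ is a compact, totally disconnected, perfect space, hence a Cantor set; perfectness is exactly where aperiodicity enters again (no isolated tilings), total disconnectedness comes from the ultrametric-like structure of the patch-agreement topology. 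I would then check that the transition maps between overlapping flow boxes are continuous and leafwise smooth, using that the group action is by isometries of a fixed left-invariant metric, so the leafwise smooth structure is inherited from that of $Sol(a,b)$.

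I expect the main obstacle to be the verification that the transverse Cantor structure is genuinely perfect and that the flow-box charts are consistent, i.e.\ that the local product decomposition respects the action globally. The freeness-versus-aperiodicity link must be handled carefully because a priori the stabilizer of a tiling in the hull could be a nontrivial discrete subgroup even when the tiling is aperiodic in the affine sense; I would resolve this by noting that the stabilizer consists of isometries preserving the tiling and invoking that aperiodicity (in the strong sense that \emph{no} nontrivial element of the ambient group preserves the tiling) holds for every member of the minimal hull. Once these points are secured, the four assertions combine to give the stated theorem, with the hypothesis $\pm b/a = \log n/\log 2$ entering only through Proposition~\ref{thm:facetoface} to guarantee the face-to-face and finite-prototile properties on which finite local complexity, and hence every subsequent step, rests.
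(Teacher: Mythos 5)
Your proposal is correct and follows essentially the same route as the paper: the paper likewise takes $\mathcal{M}(a,b,\omega)$ to be the continuous hull in the patch-agreement topology, obtains compactness from the finiteness condition of \cite[Theorem~2.2]{KP}, and derives minimality and freeness from the repetitivity and aperiodicity of $\hat{\mathcal{T}}(\omega)$ supplied by Proposition~\ref{thm:facetoface}, via the general fact recalled in Section~\ref{SPenrose} that the hull of a repetitive aperiodic tiling is minimal and consists of aperiodic tilings. The only difference is one of detail: the paper asserts the transversely Cantor lamination structure without spelling out the flow-box and Cantor-transversal verification that you outline explicitly.
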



Notice that   $\mathcal{M}(a,b,\omega)$ is a $Sol(a,b)$-solenoid in the sense 
of~\cite{BG} since tilings are translated by isometries (or equivalently the 
right $Sol(a,b)$-action on  $\mathcal{M}(a,b,\omega)$ is derived from the natural left 
action of $Sol(a,b)$ on itself) although other free minimal actions of 
$Sol(a,b)$ on compact spaces can be constructed using similar ideas. 
\medskip 

Since $Sol(a,b)$ is amenable, there is always a probability measure $\mu$ on 
 $\mathcal{M}(a,b,\omega)$ which is invariant under the action of  $Sol(a,b)$. If we use 
the Morse sequence to construct the tiling  $\hat{\mathcal{T}}(\omega)$, 
 this action is uniquely ergodic, whereas  $\mathcal{M}(a,b,\omega)$  admits many ergodic invariant 
measures when we use an Oxtoby sequence  $\omega$ to decorate the tiles of 
$\hat{\mathcal{T}}$ similarly to \cite{Petite}. 
\medskip 

If $a = b$, the Lie group $Sol(a,b)$ is unimodular, isomorphic to $Sol^3$, and 
then $\mu$ is completely invariant (see the proof of Theorem~5.2 
of~\cite{Connell-Martinez}). In fact, using the natural foliated structure of 
 $\mathcal{M}(a,b,\omega)$, we can directly prove the following result which is the 
analogue of \cite[Proposition 3.1]{Petite} in our context. A more general 
version valid for any lamination defined by a locally free action is given in 
Appendix~A.

\begin{proposition} \label{prop:nonunimodular}
  The space of tiling  $\mathcal{M}(a,b,\omega)$ admits a completely invariant measure 
  $\mu$ if and only if $a = b$. 
\end{proposition}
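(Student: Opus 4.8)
The plan is to identify, on the foliated space $\mathcal{M}(a,b,\omega)$, the precise relationship between a $Sol(a,b)$-invariant (hence harmonic, by Theorem~\ref{thm:harmonic}) measure $\mu$ and its disintegration along the transversal, and then to show that the complete invariance of the induced transverse measure forces the unimodularity of $Sol(a,b)$. The key geometric input is the pair of commutation relations \eqref{eq:B+} and \eqref{eq:B-}, which record how the two horocyclic flows $h^+_s$ and $h^-_s$ are rescaled by the geodesic flow $g_t$ by factors $e^{at}$ and $e^{-bt}$ respectively; the product of these scaling factors is $e^{(a-b)t}$, which is the modular factor of $Sol(a,b)$ and equals $1$ exactly when $a=b$.

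First I would set up the local product structure of the lamination on a flow box, writing the leafwise direction as spanned by the left-invariant fields $X$, $Y$, $Z$ and the transverse direction as the Cantor set coming from the tiling combinatorics. A measure $\mu$ invariant under the full $Sol(a,b)$-action disintegrates locally as a leafwise volume (the left Haar measure restricted to plaques) times a transverse measure $\nu$ on the Cantor transversal. The left-invariant volume element is $e^{-2az}\,dx\,e^{2bz}\,dy\,dz$ up to normalization, whose leafwise part already encodes the non-unimodular scaling. Complete invariance means that $\nu$ is preserved by the holonomy pseudogroup generated by the partial transformations coming from the flows, in particular by the holonomy of the geodesic flow $g_t$ acting on the transversal.

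Next I would compute the effect of the geodesic holonomy on the transverse measure. Moving along $g_t$ sends a horocyclic patch in the $x$-direction of length $\ell$ to one of length $e^{at}\ell$ and a patch in the $y$-direction of length $\ell$ to one of length $e^{-bt}\ell$, by \eqref{eq:B+} and \eqref{eq:B-}. Because the transversal is carried transversally to the leaves but the holonomy return maps are affine in the $(x,y)$-coordinates, the induced holonomy on $\nu$ scales by the Jacobian $e^{(a-b)t}$. For $\nu$ to be genuinely $\mathcal{R}(a,b)$-invariant (i.e. preserved by these partial transformations, as required for complete invariance) this Jacobian must be $1$, which gives $a=b$. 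This is the heart of the argument and the step I expect to be the main obstacle: one must verify carefully that the tiling holonomy return maps really are the affine rescalings predicted by \eqref{eq:R} and \eqref{eq:S}, rather than something that could absorb the modular factor; here the face-to-face and repetitivity conclusions of Proposition~\ref{thm:facetoface} are essential, since they guarantee that the return maps are restrictions of the group translations $\hat R$, $\hat S$, $\hat T_k$ and hence carry the scaling factor $\Delta^{\pm i} = 2^{\mp ib/a}$ transparently.

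Conversely, when $a=b$ the group $Sol(a,b)\cong Sol^3$ is unimodular, its left Haar measure is bi-invariant, and the leafwise and transverse factors no longer scale against each other; the $Sol(a,b)$-invariant measure $\mu$ then disintegrates with a transverse measure that is automatically $\mathcal{R}(a,b)$-invariant, so $\mu$ is completely invariant. For this direction I would invoke the unimodular case already indicated via the proof of Theorem~5.2 of \cite{Connell-Martinez}, checking only that the general harmonic-equals-invariant identification of Theorem~\ref{thm:harmonic} lets us upgrade a $Sol(a,b)$-invariant $\mu$ to a completely invariant one. Combining the two directions yields the stated equivalence.
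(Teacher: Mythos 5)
Your overall skeleton is the same as the paper's: disintegrate $\mu$ over flow boxes $U\cong D\times T$, observe that the changes of coordinates / holonomy return maps are precisely the left translations $\hat{R}^i\scirc\hat{S}^j\scirc\hat{T}_k$ coming from the construction of $\hat{\mathcal{T}}$ in Proposition~\ref{thm:tiling}, and locate the obstruction in the modular factor $e^{(a-b)t}$. However, there is a genuine error in your setup that, taken literally, destroys the argument: you assert that the conditional measures on plaques are restrictions of the \emph{left} Haar measure. Since $\mu$ is invariant under the \emph{right} action (the action that moves base points of tilings), the conditionals $\mu_t$ are invariant under right translations, hence are restrictions of the \emph{right} Haar measure $-dx\wedge dy\wedge dz$, not of the left Haar measure $-e^{(b-a)z}dx\wedge dy\wedge dz$. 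This is not a harmless convention choice. The left Haar measure \emph{is} invariant under the left-translation transition maps, so with your disintegration the leafwise measures would match up consistently across charts and the transverse measure would automatically be holonomy-invariant --- no obstruction at all, for any $a,b$. The whole content of the proof is the mismatch: the conditionals are forced to be right Haar, the holonomy transitions are left translations, and right Haar is preserved by left translations if and only if $Sol(a,b)$ is unimodular, i.e.\ $a=b$. (Your formula $e^{-2az}\,dx\,e^{2bz}\,dy\,dz$ for the left-invariant volume is also off: the Riemannian volume of $e^{-2az}dx^2+e^{2bz}dy^2+dz^2$ is $e^{(b-a)z}dx\,dy\,dz$.)

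A second, related confusion: you derive the scaling by saying the holonomy return maps "are affine in the $(x,y)$-coordinates," so "the induced holonomy on $\nu$ scales by the Jacobian $e^{(a-b)t}$." But $(x,y,z)$ are \emph{leafwise} coordinates; the holonomy acts on the Cantor transversal, where there is no Jacobian, only a Radon--Nikodym derivative. The correct mechanism is a compensation argument: since $\mu=\int_T\mu_t\,d\nu(t)$ is globally well defined and the right Haar conditionals pick up the modular factor under a transition such as $\hat{R}$ (left translation by $(0,0,\frac{\log 2}{a})$, which by \eqref{eq:B+} and \eqref{eq:B-} expands $x$ by $2$ and contracts $y$ by $\Delta=2^{-b/a}$), the transverse measure must transform by the inverse factor; hence $\nu$ is $\mathcal{R}(a,b,\omega)$-invariant exactly when the modular function is trivial, i.e.\ $a=b$. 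Your converse direction (unimodularity of $Sol^3$ via Connell--Mart\'{\i}nez when $a=b$) is fine, and the appeal to harmonicity via Theorem~\ref{thm:harmonic} is unnecessary --- the paper's proof never uses it. Once you replace left Haar by right Haar in the disintegration and phrase the scaling as a Radon--Nikodym compensation rather than a transverse Jacobian, your argument becomes the paper's proof.
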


\begin{proof} 
  Let $\mu$ be a probability measure on  $\mathcal{M}(a,b,\omega)$ which is invariant 
  by the right $Sol(a,b)$-action. The space  $\mathcal{M}(a,b,\omega)$ is covered by 
  a finite number of  flow boxes $U\cong D\times T$ where $T$ is a clopen 
  subset of the canonical transversal (obtained by fixing base points in the 
  prototiles and homeomorphic to the Cantor set). In restriction to each flow 
  box $U$, the measure $\mu$ disintegrates into a family of probability 
  measures $\mu_t$ on the plaques $D\times\{t\}$ with respect to the 
  push-forward measure $\nu$ on $T$, that is, 
  \[
    d\mu(p,t) = d\mu_t(p)\,d\nu(t)
  \]
  for every $(p,t)\in U\cong D\times T$. Moreover, since $\mu$ is invariant under
  the right $Sol(a,b)$-action, for $\nu$-almost $t\in T$, $\mu_t$ is also 
  invariant under the right $Sol(a,b)$-action, so $\mu_t$ is the restriction of 
  the right Haar measure of $Sol(a,b)$. In fact, in our case, plaques are 
  simply tiles and changes of coordinates are obtained from transformations 
  $\hat{R}^n\scirc\hat{S}^m\scirc\hat{T}_k$ used in the construction of  
  $\hat{\mathcal{T}}$ in 
  Proposition~\ref{thm:tiling}. In other words, the changes 
  of coordinates are left translations in the group $Sol(a,b)$, and hence the 
  measure $\mu$ is completely invariant if and only the right Haar measure on 
  $Sol(a,b)$ is also left invariant. The left Haar measure on $Sol(a,b)$ is 
  given by 
  \[
    -e^{(b-a)z}dx\wedge dy\wedge dz
  \]
  while the right Haar measure is given by $-dx\wedge dy\wedge dz$. So 
  $Sol(a,b)$ is unimodular if and only if $a=b$. 
\end{proof}

More generally, any measure $Sol(a,b)$-invariant
can be interpreted as a harmonic measure, as in~\cite[Lemma 4.2]{Petite}:

\begin{proposition}
  \label{prop:harmonic}
 Any $Sol(a,b)$-invariant
probability measure $\mu$ on $\mathcal{M}(a,b,\omega)$
 is harmonic.
\end{proposition}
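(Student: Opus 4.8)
The plan is to show that a $Sol(a,b)$-invariant probability measure $\mu$ satisfies the defining property of a harmonic measure in the sense of Garnett~\cite{Garnett}: its local disintegrations on flow boxes have leafwise densities that are harmonic for the leafwise Laplacian associated with the left-invariant metric. First I would recall that, as established in the proof of Proposition~\ref{prop:nonunimodular}, the $Sol(a,b)$-invariance of $\mu$ forces the local conditional measures $\mu_t$ on the plaques to coincide (up to a constant) with the restriction of the right Haar measure of $Sol(a,b)$. Thus the leafwise density of $\mu$ with respect to the left-invariant Riemannian volume is, on each plaque, a fixed multiple of the Radon--Nikodym derivative $\frac{d(\text{right Haar})}{d(\text{left-invariant volume})}$.

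The key computation is then to identify this leafwise density explicitly and verify it is leafwise harmonic. From the formulas in the proof of Proposition~\ref{prop:nonunimodular}, the left-invariant Riemannian volume is $-dx\wedge dy\wedge dz$ (the Riemannian volume of the metric $e^{-2az}dx^2+e^{2bz}dy^2+dz^2$) while the right Haar measure is also $-dx\wedge dy\wedge dz$; the relevant modular factor appearing in the disintegration is $e^{(b-a)z}$. So the leafwise density of $\mu$ is proportional to the function
\[
  \phi(x,y,z) = e^{(a-b)z},
\]
and I would check that $\phi$ is harmonic for the Laplace--Beltrami operator $\Delta_{\mathrm{leaf}}$ of the left-invariant metric restricted to each leaf. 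Writing $\Delta_{\mathrm{leaf}}$ in the left-invariant frame $\{X,Y,Z\}$ and using the structure constants $[X,Z]=aX$, $[Y,Z]=-bY$, $[X,Y]=0$ recorded in Section~\ref{SSol(a,b)}, the Laplacian on a function depending only on $z$ reduces to a second-order ordinary differential operator; the first-order term is governed precisely by the mean curvature of the horospheres, i.e.\ by the trace of $\operatorname{ad}$, which equals $a-b$. One then verifies $\Delta_{\mathrm{leaf}}\,e^{(a-b)z}=\big((a-b)^2-(a-b)(a-b)\big)e^{(a-b)z}=0$, confirming harmonicity.

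Alternatively, and perhaps more transparently, I would argue via the probabilistic characterization: a measure is harmonic if and only if it is stationary for the leafwise heat diffusion, equivalently invariant under the leafwise heat semigroup. Since $\mu$ is invariant under the full isometric $Sol(a,b)$-action and the leafwise heat semigroup commutes with leafwise isometries, any $Sol(a,b)$-invariant measure is automatically heat-stationary, hence harmonic. This mirrors the reasoning of~\cite[Lemma~4.2]{Petite}, transported from the affine-group setting to the present solvable-group setting.

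The main obstacle I expect is bookkeeping rather than conceptual: one must write the leafwise Laplacian correctly in the non-orthonormal left-invariant coordinates and keep careful track of the modular factor $e^{(b-a)z}$, making sure the density is expressed with respect to the \emph{leafwise Riemannian volume} (not the transverse measure) before applying $\Delta_{\mathrm{leaf}}$. The subtlety is that harmonicity is a statement about the conditional densities $\mu_t$ relative to the leafwise volume, so I must confirm that the disintegration identified in Proposition~\ref{prop:nonunimodular} indeed yields the density $e^{(a-b)z}$ against leafwise volume and not against some other reference measure; once that normalization is pinned down, the harmonicity computation is a short and direct verification.
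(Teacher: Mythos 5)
Your first (direct) argument is correct and is essentially the paper's own proof: disintegrate $\mu$ over flow boxes, use the identification from the proof of Proposition~\ref{prop:nonunimodular} of the conditional measures $\mu_t$ with the right Haar measure of $Sol(a,b)$, and observe that the density of the right Haar measure with respect to the leafwise Riemannian volume is $e^{(a-b)z}$, which is leafwise harmonic. You even carry out the harmonicity verification ($f''+(b-a)f'=0$ for $f=e^{(a-b)z}$) that the paper leaves implicit. One slip to correct: you assert that the left-invariant Riemannian volume is $-dx\wedge dy\wedge dz$ and that the right Haar measure is ``also'' $-dx\wedge dy\wedge dz$. In fact the Riemannian volume of $e^{-2az}dx^2+e^{2bz}dy^2+dz^2$ is $-e^{(b-a)z}dx\wedge dy\wedge dz$ (the left Haar measure), while $-dx\wedge dy\wedge dz$ is the right Haar measure; as written your sentence is inconsistent, since if the two measures coincided the density would be $1$ and $\mu$ would be completely invariant, which Proposition~\ref{prop:nonunimodular} rules out for $a\neq b$. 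Your subsequent use of the modular factor $e^{(b-a)z}$ restores the correct density $e^{(a-b)z}$, so the argument survives, but the reference measures should be named correctly.

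The ``alternative, more transparent'' argument is genuinely wrong, and you should not offer it as a substitute. The $Sol(a,b)$-action on $\mathcal{M}(a,b,\omega)$ is \emph{not} by leafwise isometries: identifying a leaf with $Sol(a,b)$ via the orbit map, the leafwise metric is the left-invariant one while the action restricts to \emph{right} translations, and right translations preserve a left-invariant metric only if the adjoint representation is orthogonal; here $\mathrm{Ad}$ of $(0,0,t)$ scales $X$ by $e^{at}$ and $Y$ by $e^{-bt}$, so this fails (even when $a=b$). The paper's phrase ``action by isometries'' refers to tilings being moved by isometries of $Sol(a,b)$ (left translations), not to the leafwise action being isometric. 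A concrete contradiction: the computation in the proof of Theorem~\ref{thm:unimodular} shows that right translation by $g_0$ scales the leafwise-volume-integrated measure by the modular function $\lambda(g_0)$, which is impossible for leafwise isometries unless $\lambda\equiv1$, i.e.\ unless $a=b$. Consequently the leafwise heat semigroup need not commute with the action. Moreover, even granting commutation, invariance of $\mu$ would only give that the heat-evolved measure is again invariant; to conclude it equals $\mu$ you would need unique ergodicity (which fails for the Oxtoby decorations, as the paper notes) or a fixed-point argument, and the latter only produces \emph{some} harmonic invariant measure rather than showing \emph{every} invariant measure is harmonic. The group-specific input -- the harmonicity of $e^{(a-b)z}$ -- cannot be avoided, and it is exactly what your first argument supplies.
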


\begin{proof} 
  Let $\mu$ be a probability measure on  $\mathcal{M}(a,b,\omega)$ which is invariant under the 
  right $Sol(a,b)$-action. As in the proof of 
  Proposition~\ref{prop:nonunimodular}, in restriction to each flow 
  $U\cong D\times T$, the measure $\mu$ disintegrates into a family of 
  probability measures $\mu_t$ on the plaques $D\times\{t\}$ with respect to 
  the push-forward measure $\nu$ on subset $T$. Moreover, for $\nu$-almost 
  $t\in T$, the measure $\mu_t$ is induced by the right Haar measure of 
  $Sol(a,b)$. But this measure
  \[
    -dx\wedge dy\wedge dz
  \]
  is absolutely continuous with respect to the Riemannian volume
  \[
    -e^{(b-a)z}dx\wedge dy\wedge dz
  \]
  with harmonic density $e^{(a-b)z}$. Then $\mu$ is harmonic.
\end{proof}

In fact, the study of brownian motion and harmonic functions on $Sol(a,b)$ by S. Brofferio, M. Salvatori and W. Woess \cite{BSW} allows us to generalize \cite[Theorem 1.1]{Petite} to our context:

\begin{theorem} \label{thm:harmonic} 
 A probability measure $\mu$ on $\mathcal{M}(a,b,\omega)$ is harmonic if and only if it is $Sol(a,b)$-invariant.
\end{theorem}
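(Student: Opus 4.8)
The plan is to prove the theorem by establishing the converse of Proposition~\ref{prop:harmonic}, since that proposition already gives one implication: every $Sol(a,b)$-invariant probability measure is harmonic. So the work lies in showing that any harmonic measure $\mu$ on $\mathcal{M}(a,b,\omega)$ must be $Sol(a,b)$-invariant. My strategy is to reduce the statement to a Liouville-type property for the Brownian motion on the leaves, which are all isometric copies of $Sol(a,b)$ with its left-invariant metric, and then to import the precise description of bounded harmonic functions on $Sol(a,b)$ from \cite{BSW}.

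First I would recall Garnett's characterization: a probability measure $\mu$ is harmonic precisely when it is stationary for the leafwise heat diffusion, i.e. invariant under the leafwise heat semigroup, equivalently $\int \Delta_{\F} f \, d\mu = 0$ for all suitable leafwise-smooth $f$, where $\Delta_{\F}$ is the leafwise Laplacian for the induced Riemannian metric. Then I would disintegrate $\mu$ along the flow boxes $U \cong D \times T$ as in the proofs above, writing $d\mu(p,t) = d\mu_t(p)\,d\nu(t)$, so that harmonicity translates into a condition on the transverse measure $\nu$ together with the leafwise densities. The key geometric input is that each leaf is isometric to $Sol(a,b)$, so the leafwise Brownian motion is exactly the Brownian motion on $Sol(a,b)$ studied by Brofferio, Salvatori and Woess.

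The heart of the argument is the following. By \cite{BSW}, the bounded harmonic functions on $Sol(a,b)$ are controlled by the behaviour of Brownian paths, which escape to the boundary in a manner dictated by the drift coming from the non-unimodularity; in particular one obtains a sharp identification of the Poisson boundary and hence of the space of bounded leafwise-harmonic functions. The plan is to use this to show that a harmonic measure cannot distribute mass transversally in any way other than the one forced by the Haar disintegration: because the transverse Cantor direction carries no nontrivial bounded harmonic functions compatible with the leafwise diffusion, the conditional measures $\mu_t$ must coincide (up to the density normalization) with the Riemannian-volume disintegration, and the heat-equation stationarity forces the transverse measure $\nu$ to be holonomy-quasi-invariant with exactly the Radon--Nikodym cocycle produced by the right Haar measure. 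Unwinding the density $e^{(a-b)z}$ computed in Proposition~\ref{prop:harmonic}, this pins $\mu$ down to be the push-forward of right Haar measure on each plaque, which is precisely $Sol(a,b)$-invariance. I would then invoke the commutation relations \eqref{eq:R} and \eqref{eq:S}, which show the holonomy transformations are left translations in $Sol(a,b)$, to conclude that invariance of the disintegrated measures under these translations is equivalent to the global right $Sol(a,b)$-invariance of $\mu$.

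The main obstacle I anticipate is making rigorous the passage from the Liouville/Poisson-boundary results of \cite{BSW}, which are statements about a single copy of $Sol(a,b)$, to a statement about the \emph{transverse} structure of the harmonic measure on the lamination: one must show that leafwise harmonicity together with the triviality (or precise structure) of the relevant boundary leaves no freedom in the conditional measures $\mu_t$ beyond the Haar choice. Concretely, the delicate point is controlling how the heat flow couples the leafwise dynamics with the Cantor transversal, i.e. verifying that the only $\nu$-measurable families $\{\mu_t\}$ that are jointly stationary are the Haar ones; this is where I expect the real analytic work to be, and where the input of \cite{BSW} on the asymptotic behaviour of Brownian motion on $Sol(a,b)$ is indispensable. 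The unimodular borderline case $a=b$ is consistent with this picture, since there the leafwise Brownian motion has no drift and harmonic measures coincide with transversally invariant measures, recovering Proposition~\ref{prop:nonunimodular}.
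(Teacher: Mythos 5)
You correctly reduce the theorem to the implication ``harmonic $\Rightarrow$ invariant'' (the converse being Proposition~\ref{prop:harmonic}), and you correctly identify Garnett's disintegration and the results of \cite{BSW} as the essential ingredients. But the core of the argument is missing: the step where leafwise harmonicity is shown to leave no freedom in the conditional measures $\mu_t$ is precisely what you defer as ``where I expect the real analytic work to be,'' so the proposal is a plan rather than a proof. Worse, the tool you propose for that step would fail. The Garnett densities $h(\cdot,t)$ are \emph{positive}, generally unbounded, harmonic functions --- for instance $e^{(a-b)z}$, the density of the right Haar measure against the Riemannian volume, is exactly such a density --- so the theory of \emph{bounded} harmonic functions cannot control them; and in any case $Sol(a,b)$ with $a\neq b$ is not Liouville: \cite{BSW} identify a nontrivial Poisson boundary arising precisely from the drift you mention, so an appeal to ``triviality of the relevant boundary'' is not available in the only case that matters.

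The paper's proof avoids boundary theory altogether and runs as follows. For $f\geq 0$ supported in a flow box, consider the function of $g=(\alpha,\beta,\gamma)\in Sol(a,b)$ given by $\int f\, d(R_{g^{-1}})_\ast\mu$; it is bounded by $\sup f$, and the Garnett disintegration together with harmonicity of the densities shows it has the form $H(\alpha,\beta,\gamma)/e^{(a-b)\gamma}$ with $H$ a \emph{positive} harmonic function on $Sol(a,b)$. The key input is then \cite[Corollary~6.3]{BSW}: every positive harmonic function on $Sol(a,b)$ splits as $H(x,y,z)=H_1(x,z)+H_2(y,-z)$, where $H_1$ and $H_2$ are harmonic on two hyperbolic planes (with respect to Laplace--Beltrami operators carrying the drift terms $(b-a)\vf{z}$ and $(a-b)\vf{z}$). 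Applying \cite[Lemma~4.1]{Petite} on each hyperbolic factor --- a positive harmonic function whose quotient by the relevant exponential (itself harmonic) is bounded has constant quotient --- one gets that $H(x,y,z)/e^{(a-b)z}$ is constant; hence the integral above does not depend on $g$, and $(R_g)_\ast\mu=\mu$. If you want to salvage your plan, this statement about \emph{positive} (not bounded) harmonic functions, i.e.\ that $e^{(a-b)z}$ is essentially the unique positive harmonic function dominating the densities that can arise, is the precise substitute for the Liouville property you invoked.
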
 
\begin{proof} Assume $\mu$ is a harmonic measure on  $\mathcal{M}(a,b,\omega)$. By \cite[Theorem 1]{Garnett}, in restriction to each flow box $U \cong D \times T$, the measure $\mu$ disintegrates again into a family of measures $\mu_t$ on the plaques $D \times \{t\}$ with respect to the push-forward measure $\nu$ on $T$ where each measure $\mu_t$ is absolutely continuous with respect to the Riemannian volume $dvol$ having harmonic density $h(-,t)$. More precisely, for any positive continuous function $f : \mathcal{M}(a,b,\omega) \to \R$ with support contained in $U$, we have:
$$
    \int_M f d \mu = \int_T \int_{D} \, f(x,y,z,t) h(x,y,z,t) dvol(x,y,z) \, d\nu(t)
$$
where $dvol(x,y,z) =  -e^{(b-a)z}dx\wedge dy\wedge dz$ is invariant by left translations. Thus, for each point $t \in T$, the harmonic density $h(x,y,z,t)$ defined on the plaque $D$ extends to a positive harmonic function $h(-,t)$ defined on the whole Lie group $Sol(a,b)$.
If $R_g : \mathcal{M}(a,b,\omega) \to \mathcal{M}(a,b,\omega)$ is the right translation by an element $g = (\alpha,\beta,\gamma)$ of $Sol(a,b)$, then $f \scirc R_{g^{-1}} : \mathcal{M}(a,b,\omega) \to \mathcal{M}(a,b,\omega)$ is a positive continuous function whose support in contained in $U.g = R_g(U)$. Assuming that the support of $f$ is contained in both flow boxes $U$ and $U.g$, we have: 
\begin{equation} \label{integral}
\begin{split}
  \int_M f d (R_{g^{-1}})_\ast \mu & =  \int_T \int_{D} \, f((x,y,z).g^{-1},t) h(x,y,z,t)  dvol(x,y,z)  \, d\nu(t) \\
  &  =  \int_T \int_{D} \, f(x,y,z,t)  h((x,y,z).g,t) \frac{dvol(x,y,z)}{e^{(a-b)\gamma}} \, d\nu(t). 
  \end{split}
  \end{equation}
Our aim is to prove that this integral does not depend on $g$, which enables us (by decomposing the support of $f \scirc R_{g^{-1}}$ into smaller pieces contained in the flow boxes $U$) to conclude that $(R_g)_\ast \mu = \mu$.  Indeed, as $h(x,y,z,t)$ is harmonic, the map 
$$
g = (\alpha,\beta,\gamma) \in Sol(a,b) \; \longmapsto  \;  \frac{h(x+e^{az}\alpha,y+e^{-bz}\beta,z+\gamma)}{e^{(a-b)\gamma}}
$$
is also harmonic and therefore the bounded map which sends $g = (\alpha,\beta,\gamma) \in Sol(a,b)$ onto the integral~\eqref{integral} can be written as 
$$
g = (\alpha,\beta,\gamma) \in Sol(a,b) \;   \longmapsto \;   \frac{H(\alpha,\beta,\gamma)}{e^{(a-b)\gamma}}
$$
where $H$ is a positive harmonic function on $Sol(a,b)$. But according to \cite[Corollary 6.3]{BSW}, such a harmonic function decomposes as 
$$
H(x,y,z) = H_1(x,z) + H_2(y,-z)
$$
where $H_1$ is a harmonic function on the hyperbolic plane $\H(a)$ with respect to the Riemannian metric $ds^2 = e^{-2az}dx^2 + dz^2$ and the Laplace-Beltrami operator $\Delta_1 = e^{2az} \secf{x} + \secf{z} + (b-a)\vf{z}$ and $H_2$ is the harmonic function on the 
hyperbolic plane $\H(b)$ with respect to the Riemannian metric $ds^2 = e^{2bz}dy^2 + dz^2$ and the Laplace-Beltrami operator $\Delta_2 = e^{2bz} \secf{x} + \secf{z} + (a-b)\vf{z}$. By appying \cite[Lemma 4.1]{Petite} in the case of $\H(a)$ and $\H(b)$, we deduce that $H_1(x,z)/e^{(a-b)z}$ and 
$H_2(y,-z)/e^{(a-b)z}$ are constant. We deduce that $H(x,y,z)/e^{(a-b)z}$ is constant and then the integral~\eqref{integral} does not depend on $g$. 
\end{proof} 

Consequently, if $a \neq b$, the amenable equivalence relation 
$\mathcal{R}(a,b,\omega)$ induced on the canonical transversal we described above 
cannot be affable, proving Property (iii). Indeed, as $Sol(a,b)$ is solvable, 
the lamination $\mathcal{F}(a,b,\omega)$ is amenable with respect to $\mu$. Therefore, the 
equivalence relation $\mathcal{R}(a,b,\omega)$ induced on the canonical transversal is 
amenable with respect to the quasi-invariant measure class $[\nu]$. On the 
other hand, by definition, the lamination $\mathcal{F}(a,b,\omega)$ is affable if and only if the 
equivalence relation $\mathcal{R}(a,b,\omega)$ is affable, given as the union of an 
increasing sequence of compact open equivalence subrelations. According to 
\cite[Theorem 4.8]{GPS}, any AF-equivalence relation is orbit equivalent to a 
Cantor minimal $\Z$-system. But such a minimal system always admits an 
invariant measure defining a transverse 
invariant measure on 
 $\mathcal{M}(a,b,\omega)$. By Proposition~\ref{prop:nonunimodular}, this only happens 
when $a=b$. Note however that $\mathcal{R}(a,b,\omega)$  is not Kakutani-equivalent to 
any free action of a finitely generated group as the orbits of the 
$Sol(a,b)$-action are not quasi-isometric to Cayley graphs (according to 
Property (ii) deduced from~\cite{EFW2}). Finally, Property (i) follows from the 
quasi-isometric classification of solvable groups $Sol(a,b)$, also proved 
in~\cite{EFW2}.
\medskip

\section*{Appendix A. Transverse 
invariant measures for locally free actions}

Let $M$ be a compact metric space endowed with a lamination $\F$ defined by a 
right locally free action $\varphi:M\times G\to M$ of a Lie group $G$. The leaf 
passing through $x\in M$ is the orbit $x.G = \{\, x.g\mid g\in G \,\}$ 
identified with the homogenous manifold $G_x\backslash G$ where 
$G_x = \{\, g\in G \mid x.g = x \,\}$. Given any left invariant Riemannian 
metric on $G$, the space $M$ can be covered by a finite number of flow boxes 
$U_i = T_i.B$ which are obtained by translating finitely many transversals 
$T_i \subset M$ by the elements of some ball $B$ in $G$ centered at the 
identity element. In particular, holonomy transformations are defined by the 
right action of $G$ on transversals. The change of coordinates from $U_i$ to 
$U_j$ is given by left translations in $G$, that is, by local isometries 
between the plaques of $U_i$ and $U_j$. 

The leaves of $\F$ are then naturally endowed with the Riemannian volume of 
$G$, which corresponds to the left Haar measure $m_\ell$. The modular function 
$\lambda: G \to \R^\ast_+$ is given by 
\[
  \int_G f(gg_0)dm_\ell(g) = \lambda(g_0) \int_G f(g) dm_\ell(g)
\]
for any positive measurable function $f : G \to \R$. In other words, the 
measure $m_\ell$ is right invariant (and therefore $G$ is unimodular) if and 
only if $\lambda= 1$. 

\begin{theorem} \label{thm:unimodular}
  Let $(M,\F)$ be a compact lamination defined by a locally free action of a 
  Lie group $G$. If $\F$ admits a transverse
invariant measure, then 
  $G$ is unimodular.
\end{theorem}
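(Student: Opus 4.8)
The plan is to reverse the construction used in the proof of Proposition~\ref{prop:nonunimodular}: from a transverse invariant measure I would build a finite measure $\mu$ on $M$ by integrating the leafwise volume against the transverse measure, and then read off unimodularity from the way $\mu$ transforms under the right $G$-action, using the compactness of $M$.

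First I would fix a finite cover of $M$ by flow boxes $U_i = T_i.B$ and let $\nu_i$ denote the restriction to $T_i$ of the given transverse invariant measure. On each plaque $t.B$, identified with $B \subset G$, I put the leafwise Riemannian volume, which by the hypotheses of the appendix is the left Haar measure $m_\ell$, and I set $d\mu(p,t) = d\mu_t(p)\,d\nu_i(t)$ in $U_i$ with $\mu_t$ the restriction of $m_\ell$ to $t.B$. This local prescription glues to a globally defined measure because each of the two ingredients is consistent under changes of flow box: the changes of coordinates are left translations in $G$, which preserve $m_\ell$, while the $\nu_i$ patch together precisely because the transverse measure is holonomy invariant. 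Since $M$ is compact, $B$ is precompact, and each $\nu_i$ is finite, $\mu$ is a finite nonzero Borel measure on $M$.

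The key step is to compute the effect of a right translation $R_g(x) = x.g$ on $\mu$ and to show that
\[
  (R_g)_\ast \mu = \lambda(g)\, \mu
\]
for every $g \in G$, where $\lambda$ is the modular function. In a single flow box this is a Fubini computation: $\int_M f \scirc R_g\, d\mu = \int_{T_i}\int_B f(t.(bg))\,dm_\ell(b)\,d\nu_i(t)$, and the inner integral equals $\lambda(g)\int_B f(t.b)\,dm_\ell(b)$ by the defining relation of $\lambda$ applied with $g_0 = g$. The only delicate point is that $R_g$ moves mass across flow-box boundaries, so the naive one-box identity is incomplete; I would handle this exactly as in the proof of Theorem~\ref{thm:harmonic}, decomposing the support of $f \scirc R_g$ into pieces each contained in a flow box and using the holonomy invariance built into $\mu$ to reassemble the integrals. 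I expect this patching to be the main obstacle, since it is the place where the global (holonomy invariant) nature of $\mu$, as opposed to its purely local description, is genuinely used.

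Finally, because $R_g$ is a homeomorphism of the compact space $M$, the measures $(R_g)_\ast\mu$ and $\mu$ have the same total mass $\mu(M) \in (0,\infty)$. Combined with $(R_g)_\ast\mu = \lambda(g)\,\mu$, this forces $\lambda(g)\,\mu(M) = \mu(M)$, hence $\lambda(g) = 1$ for every $g \in G$. Thus the modular function is trivial and $G$ is unimodular, as claimed. Note that the sign of the exponent in the modular factor is irrelevant to the argument: any nontrivial scalar would contradict the equality of total masses.
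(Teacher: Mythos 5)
Your proposal is correct and follows essentially the same route as the paper: integrate the leafwise left Haar measure $m_\ell$ against the transverse invariant measure to obtain a finite measure $\mu$ on $M$, show $(R_g)_\ast \mu = \lambda(g)\,\mu$ using the defining relation of the modular function, and conclude $\lambda \equiv 1$ by comparing total masses. The only difference is bookkeeping: where you decompose the support of $f \scirc R_g$ and reassemble (note your one-box identity should read $\lambda(g)\int_{Bg} f(t.u)\,dm_\ell(u)$ rather than $\lambda(g)\int_B f(t.b)\,dm_\ell(b)$, precisely the boundary issue you flag), the paper avoids any patching by disintegrating $\mu$ directly over the translated box $T.(Bg_0^{-1})$, which is legitimate because $\mu$ is defined globally as the integral of $m_\ell$ against the transverse measure.
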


\begin{proof}
  Let $\nu$ be a transverse invariant measure, and let $\mu$ be the completely 
  invariant measure on $M$ obtained by integrating the left Haar measure 
  $m_\ell$ with respect to $\nu$. If $U = T.B$ is a flow box and $f:M\to\R$ is 
  a positive continuous function with support contained in $U$, then 
  \[
    \int_M f d \mu = \int_T \int_{B} f(x.g) \, dm_\ell(g) \, d\nu(x).
  \]
  If $R_{g_0} : M \to M$ is the right translation by an element $g_0 \in G$, 
  then $f \scirc R_{g_0} : M \to \R$ is a positive continuous function whose 
  support is contained in the flow box $U.{g_0^{-1}} = T.(Bg_0^{-1})$.  Then 
  \begin{align*}
    \int_M f \scirc R_{g_0} d \mu &= \int_T \int_{Bg_0^{-1}} f \scirc R_{g_0} (x.g) \, dm_\ell(g) \, d\nu(x) \\
&= \int_T \int_{Bg_0^{-1}} f (x.(gg_0)) \, dm_\ell(g) \, d\nu(x) \\
 &= \int_T  \lambda(g_0) \int_{B} f (x.g) \, dm_\ell(g) \, d\nu(x) = \lambda(g_0) \int_M f d \mu 
  \end{align*}
  and hence 
  \[
    (R_{g_0})_\ast \mu =  \lambda(g_0) \mu. 
  \]
  Since both  $\mu$ and $(R_{g_0})_\ast \mu$ are probability measures, this 
  implies that $\lambda(g_0)=1$. Therefore, $G$ is unimodular. 
\end{proof}

 As  pointed out in the introduction, this result also fits the theory of measured groupoids. Indeed, as shown by A. Connes in \cite{Connes}, see also \cite{Renaultbook}, \emph{transverse measures} are essentially the same as \emph{quasi-invariant measures} in Mackey's theory of virtual groups. In particular, an \emph{invariant
transverse measure} is a quasi-invariant measure of module $1$. The terminology for such a measure  in the foliated context is \emph{transverse holonomy-invariant measure}, usually shorten as \emph{transverse invariant measure}. In the~case of the transformational
groupoid $M \rtimes G$, a quasi-invariant measure in the sense of Mackey is exactly a measure $\mu$ on M
which is quasi-invariant under the action of $G$. The same computation
we just did in the proof of Theorem~\ref{thm:unimodular} (see also \cite[Corollaire~I.7]{Connes} and \cite[Section~I.3.21]{Renaultbook}) shows that the module $\Delta$ of $\mu$ satisfies 
$$
\Delta(x,g)  D(x,g) \lambda(g) = 1
$$
where $\lambda$ is the modular function of $G$ defined above (inverse of the usual definition) and $D$ is the Radon-Nikodym cocycle defined as
$$
(R_{g^{-1}})_\ast \mu = D(\cdot,g) \mu.
$$
If $\mu$ is a probability measure of module $\Delta = 1$, this implies $\lambda =1$. Thus, if $G$ is not unimodular, there are no finite invariant transverse measures. The authors would like to thank the referee for this remark, which is reproduced almost verbatim, and for a careful reading of the manuscript.

\end{document}